\newcommand{\lt}{\triangleleft}
\newcommand{\Z}{\mathbb{Z}}
\newcommand{\R}{\mathbb{R}}
\newtheorem{theorem}{Theorem}[section]
\newtheorem{corollary}[theorem]{Corollary}
\theoremstyle{definition}
\newtheorem{definition}[theorem]{Definition}
\newtheorem{example}[theorem]{Example}
\theoremstyle{remark}
\numberwithin{equation}{section}
\begin{document}

\title{Twist Spinning Knotted Trivalent Graphs}
\author{J. Scott Carter, Seung Yeop Yang}

\date{}

\begin{abstract}



In 1965, E. C. Zeeman proved that the $(\pm1)$-twist spin of any knotted sphere in $(n-1)$-space is unknotted in the $n$-sphere. In 1991, Y. Marumoto and Y. Nakanishi gave an alternate proof of Zeeman's theorem by using the moving picture method. In this paper, we define a knotted $2$-dimensional foam which is a generalization of a knotted sphere and prove that a $(\pm1)$-twist spin of a knotted trivalent graph may be knotted. We then construct some families of knotted graphs for which the  $(\pm1)$-twist spins are always unknotted.

\end{abstract}

\maketitle

\section{Introduction}

The paper will begin with a short history of higher dimensional knot theory. We present the basic definitions and facts  necessary for this study in Section $1$. We quickly review twist spinning as a construction of knotted spheres in Section $2$.  These ideas are generalized to construct the twist spin of a knotted trivalent graph in Section $3$. There we study $(\pm 1)$-twist spins. 
We work in the smooth (or piecewise linear) category throughout this paper.

\subsection{Historical Background} The birth of higher dimensional knot theory occurred in 
 1925,  when Artin~\cite{Art25} introduced the notion of spinning of a classical knot which is a method of making a knotted surface from a given classical knot. In the late 1950s and early 1960s, a number of authors, including Haefliger~\cite{Hae62}, Kinoshita~\cite{Kin60}, Levine~\cite{Lev65}, and Gluck~\cite{Glu62}, looked at the study of higher dimensional knots from the point of view of algebraic topology. In 1961, Fox~\cite{Fox61} presented a talk at Georgia Topology Conference that evolved into the paper ``A quick trip through knot theory." He gave interesting examples of knotted spheres and the fundamental groups of their complements.  
 
 The fundamental group is an algebraic quantity associated to the complement of the knot or knotted sphere. The most basic invariant of a knot or knotted sphere is the complement itself. In the classical case of knotted circles in $3$-dimensional space, the complement is a complete invariant. However, this result, due to Gordon and Luecke~\cite{Gor89}, is rather recent. It has been known that links are not determined by their complement. In the case of knotted spheres in $4$-space, Gluck~\cite{Glu62} proved that there are distinct knotted spheres that have homeomorphic complements.

In 1965, Zeeman~\cite{Zee65} introduced the notion of twist spun knots and showed that $(\pm1)$-twist spun knots are equivalent to trivial $2$-knots. More generally twist spun knots are fibered with the fiber being the branched covers of the punctured $3$-sphere with branch set the given knot. In 1979, Litherland~\cite{Lit79} generalized Zeeman's construction by defining  \emph{deform spinning}. In a letter to Cameron Gordon, Litherland proved that Fox's example $12$  is the $2$-twist spun trefoil, and subsequently Kanenobu~\cite{Kan83} showed that all of Fox's $2$-spheres are twist spun knots.

\begin{center}
\rule{2in}{0.01in}
\end{center}

A \emph{knotted(spatial) trivalent graph} or simply \emph{KTG} is an embedding into $3$-space of a graph, for which every vertex has degree $3$. In this paper we will study the notion of $n$-twist spin of a knotted trivalent graph as a generalization of Artin and Zeeman's studies \cite{Art25, Zee65}. The twist spins of a KTG are examples of knotted foams (this concept will be defined in a moment). This paper is a study of a limited class of examples of knotted foams; even so the results are surprising enough to indicate that the study of knotted foams is potentially interesting.

\section{Knotted $2$-Dimensional Foams ($2$-Foams)}

\subsection{Preliminaries}
A \emph{knotted sphere} or simply \emph{$2$-knot} is the image of a smooth (or PL locally flat) embedding $f:S^{2} \to \mathbb{R}^{4}$. A \emph{knotted $2$-dimensional foam} is analogous to a  knotted sphere in the same way that a knotted trivalent graph is analogous to a classical knot. A formal definition follows.

\begin{definition}
A \emph{$2$-dimensional foam} or simply \emph{$2$-foam} is a compact topological space such that a neighborhood of any point is homeomorphic to a neighborhood of a point in $Y^2$ that is depicted in  Fig.~\ref{1}.
\end{definition}

\begin{figure}[h]
\includegraphics[width=6cm]{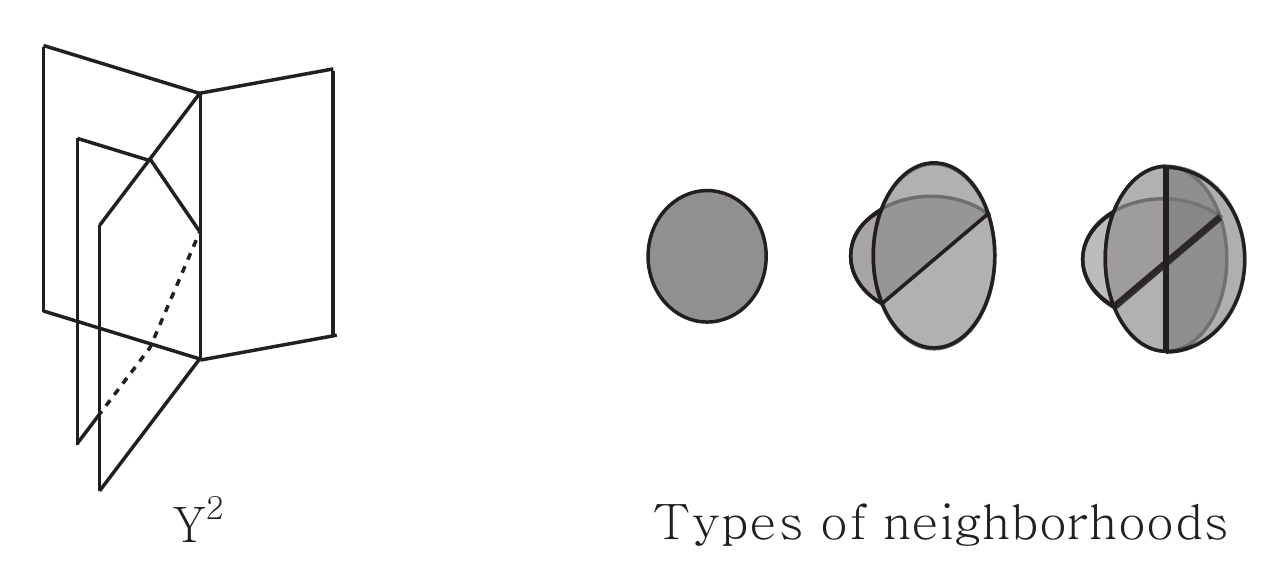}
\caption{Neighborhoods of a point in $Y^{2}$}
\label{1}
\end{figure}

For more specificity, we define $Y_1^0 =(0,1/2,1/2)$, $Y^0_2=(1/2,0,1/2)$, and $Y^0_3=(1/2,1/2,0)$. The space $Y^1$ is the cone on $Y_1^0 \cup Y^0_2 \cup Y^0_3$ where the cone point is the barycenter $(1/3,1/3,1/3)$ of the standard equilateral triangle
$\Delta^2= \{ (x,y,z) \in \R^3: 0 \le x,y,z \ \&  \ x+y+z=1 \}$. Let 
$$\Delta^3= \{ (x_1,x_2,x_3,x_4) \in \R^4 : \sum_{i=1}^4 x_i=1  \ \& \ 0\le x_i \ {\mbox{\rm for}} \ i=1,2,3,4 \}.$$ Let $\Delta^2_i$ denote the $i\/$th face in which $x_i=0$. Consider a copy $Y^1_i \subset \Delta^2_i$ for each $i=1,2,3,4.$ The space $Y^2$ is the cone on $\cup_{i=1}^4 Y^1_i$ where the cone point is the baycenter $b=(1/4,1/4,1/4,1/4)$. Clearly, the space $Y^2$ generalizes to all dimensions. In such generalizations, one can construct notions of $n$-dimensional foams. 

According to the definition of a foam, there are three types of neighborhoods of a point in a $2$-foam (Fig.~\ref{1}), and the boundary of a $2$-foam is a trivalent graph.  A $2$-foam with no boundary is said to be \emph{closed}. 

A $2$-foam is an analogue of a trivalent graph. So 
aspects of embeddings of trivalent graphs will be reviewed. An embedded knotted graph $\Gamma$ in $\mathbb{R}^{3}$ (or $S^{3}$) is \emph{unknotted} if it is ambiently isotopic to an embedding in $\mathbb{R}^{2}$. A subgraph of $\Gamma$ which is a knot in $\mathbb{R}^{3}$ is called a \emph{constituent knot} of $\Gamma$, and a mutually disjoint union of constituent knots of $\Gamma$ is said to be a \emph{constituent link} of $\Gamma$.

Any $\theta$-curve embedded in $\mathbb{R}^{3}$ has three constituent knots. Although all three constituent knots may be unknotted on a $\theta$-curve, the $\theta$-curve may be knotted. For example, the Kinoshita's $\theta$-curve (in Fig.~\ref{8}) is a knotted $\theta$-curve for which all constituent knots are unknotted. A graph $\Gamma$ in $\mathbb{R}^{3}$ is said to be \emph{almost trivial} if $\Gamma$ is a non-trivial planar graph and every proper subgraph of $\Gamma$ is unknotted \cite{Kaw96}.

\begin{figure}[h]
\includegraphics[width=2.5cm]{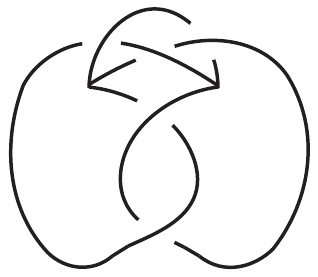}
\caption{Kinoshita's $\theta$-curve}
\label{8}
\end{figure}

\begin{definition} The {\it $k$-twist spin} of  a classical knot $K$ is obtained as follows.
Let $\R^3_+= \{(x,y,z): 0\le z\}$, and  for $\theta \in [0, 2\pi), $ let $\R^3_+(\theta)= \{(x,y,z \cos{\theta}, z \sin{\theta}): 0\le z \}$. Consider once and for all a standard identification between $\R^3_+$ and $\R^3_+(\theta).$ Embed a $(1,1)$ tangle whose closure is $K$ into the ball $B=\{(x,y,z): x^2+y^2+(z-3)^2 \le 1 \}$ such that the end points of the tangle are at $(0,\pm 1, 3)$. Now join these by {\sf L}-shaped arcs to the points $(0,\pm 3, 0)$ in the plane $z=0$; see Fig.~\ref{Tw} for details. As $\theta$ varies, rotate the space in the ball $B$ a total of $k$ full twists. Since $\R^4$ can be thought of the union of $\R^3_+(\theta)$ as $\theta$ varies and since the tangle twists within the ball $B$, the result is a sphere embedded in $\R^4$. The half spaces $\R^3_+(\theta)$ are called the {\it pages} of {\it an open book decomposition} of $\R^4$.

Note that an $n$-twist spin of a KTG is defined similarly, but the graph is broken along one edge and embedded in the ball $B$ in a way similar to the tangle representation of $K$. \end{definition}

\begin{figure}[h]
\includegraphics[width=8cm]{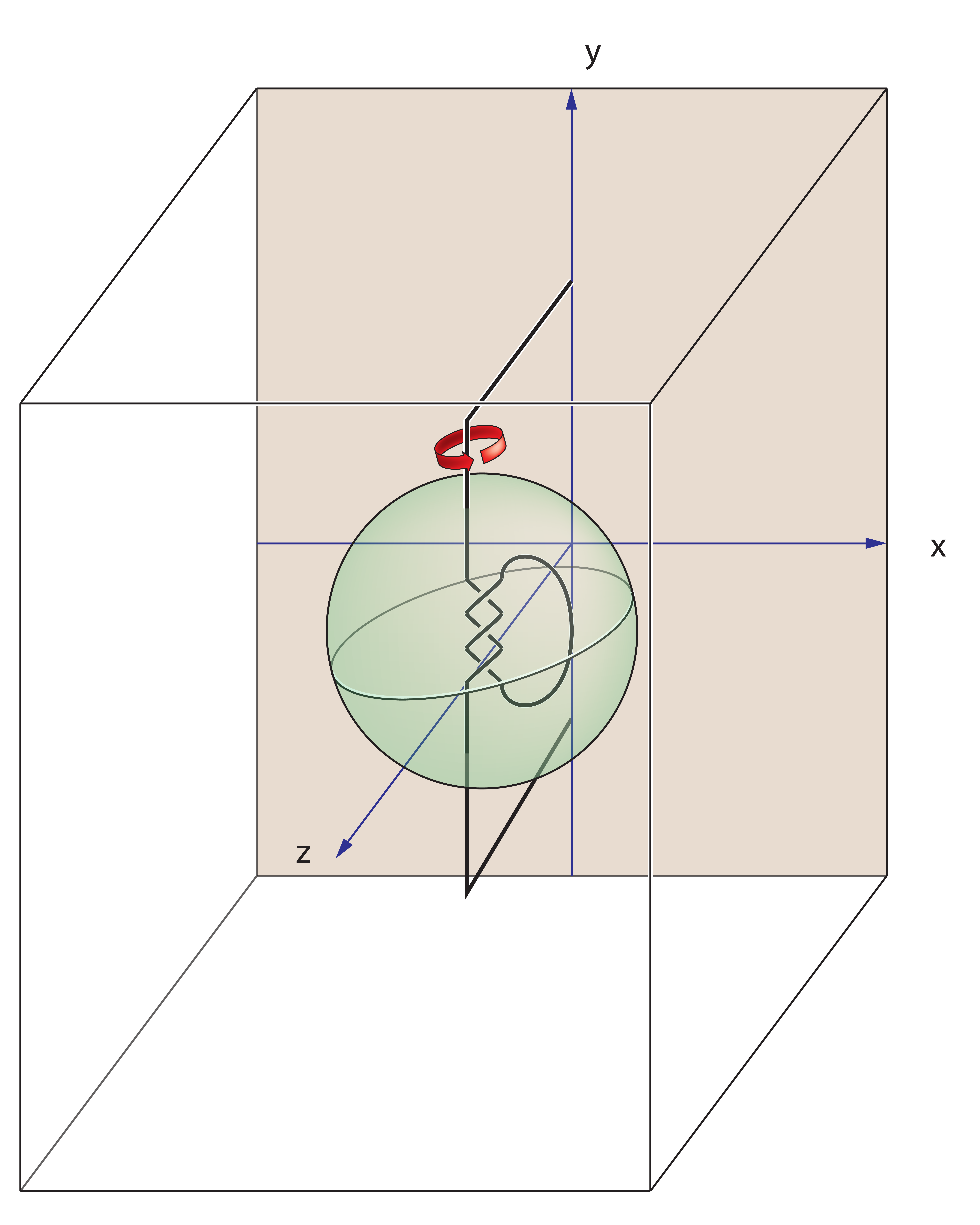}
\caption{Twist spinning a classical knot}
\label{Tw}
\end{figure}

We now discuss the analogues of triviality for $2$-foams. We say that an embedded $2$-foam in $4$-space is \emph{unknotted} if it is ambiently isotopic to an embedding in $3$-space.

\subsection{Is a $(\pm1)$-twist spin of a knotted trivalent graph unknotted?}
The following theorem due to Zeeman is remarkable, and the motivation for studying twist spins of KTGs. First we need to develop some notation. 

A knot is called \emph{fibered} if its complement is a fiber bundle over $S^{1}$. If a $2$-knot is fibered, its fiber is a $3$-manifold whose boundary is homeomorphic to $S^{2}$. The closed $3$-manifold obtained from such a fiber by attaching a $3$-ball along its boundary is called the \emph{closed fiber}.

\begin{theorem}\cite{Zee65} \label{Zman}
The $(\pm k)$-twist spin of any knot $K \subset S^{n-1}$ is fibered   where the fiber is the $k$-fold branched covering of $S^{n-1}$ in which the branch set is the knot $K$.
\end{theorem}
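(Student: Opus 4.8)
The plan is to exploit the open book decomposition of $S^n$ that is built into the very definition of the $k$-twist spin.  Recall that $S^n$ (equivalently $\R^n$ after removing a point) is the union of half-spaces $\R^{n-1}_+(\theta)$ as $\theta$ ranges over $[0,2\pi)$, all sharing the common binding axis, and that the twist-spun knot is assembled by placing a copy of the knotted $(1,1)$-tangle in each page while rotating the contents of the ball $B$ through a total angle of $2\pi k$ as $\theta$ goes once around.  First I would set up coordinates so that the spun knot $K^\ast$ meets the binding in its two endpoints and meets each interior page $\R^{n-1}_+(\theta)$ in a copy of the knotted arc that has been rotated by the fraction $k\theta/2\pi$ of a full twist.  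The goal is to produce an explicit fibration of the complement $S^n \smallsetminus K^\ast$ over $S^1$ and to identify its fiber.

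Next I would describe the fiber directly.  Fix a page $\R^{n-1}_+(\theta_0)$; its closure meets $K^\ast$ in a single knotted arc whose endpoints lie on the binding.  Removing a tubular neighborhood of $K^\ast$ from this page, and then completing by the branch locus, should be recognized as the $k$-fold cyclic branched cover of $S^{n-1}$ branched over $K$.  The key mechanism is monodromy: as $\theta$ increases by $2\pi$, the identification of one page with the next is \emph{not} the identity but the rotation induced by the twisting in $B$, and because the total twist is $k$ this gluing map realizes exactly the deck transformation of the $k$-fold cyclic cover.  I would therefore build the fibration map $\phi: S^n \smallsetminus K^\ast \to S^1$ by sending a point in (a neighborhood of) page $\R^{n-1}_+(\theta)$ to an angular coordinate that combines $\theta$ with the meridional angle around $K^\ast$, arranging that $\phi$ is a submersion away from $K^\ast$ and that each fiber $\phi^{-1}(\text{pt})$ is the punctured branched cover just described.

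I would then verify the two things a fibration argument requires.  First, local triviality: near the binding one must check that the pages fit together into a genuine fiber bundle rather than merely a singular foliation, and this is where the $(\pm1)$ versus general-$k$ distinction is invisible at the level of fiberedness but will matter later for triviality.  Second, I would confirm that the abstract fiber is independent of the base point and is homeomorphic to the $k$-fold branched cover, by tracking how the twisting flow carries one page diffeomorphically to the next and how $k$ full twists reassemble the $k$ branched sheets.  The main obstacle I expect is precisely the behavior along the binding circle (the spun axis): one must show the twisting deformation extends smoothly over the whole of $S^n$ and that the resulting monodromy is honestly the covering deck transformation of finite order $k$, with no extraneous local singularities of $\phi$.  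Once local triviality near the binding is secured, Ehresmann's fibration theorem upgrades the submersion $\phi$ to a locally trivial bundle over $S^1$, and the identification of the fiber as the $k$-fold cyclic branched cover of $S^{n-1}$ branched along $K$ completes the proof.
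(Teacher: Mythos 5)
The paper does not actually prove this statement: it is quoted from Zeeman's 1965 article and used as a black box (only its consequence for $k=\pm 1$ is reargued later, via Marumoto--Nakanishi's motion-picture method). So there is no proof in the paper to compare against, and I can only assess your outline on its own terms. The overall shape --- use the open book decomposition built into the twist-spinning construction, untwist the contents of $B\times S^1$, and exhibit the exterior as a mapping torus whose monodromy is the deck transformation of the cyclic branched cover --- is indeed the shape of Zeeman's argument.

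There is, however, a genuine gap in your identification of the fiber. You propose that the fiber is a single page $\R^{n-1}_+(\theta_0)$ with a tubular neighborhood of the knotted arc removed, ``completed by the branch locus.'' That space is just the exterior of the arc in the page, and completing it along the branch locus returns the page itself, i.e.\ the $1$-fold branched cover --- not the $k$-fold one. The true fiber cannot lie in a single page; it is a $k$-sheeted object, and producing it is exactly where the work lies. Concretely: on the twisted region one untwists $B\times S^1$ to identify the relevant piece of the exterior with $X\times S^1$, where $X$ is the tangle exterior, and one needs a map $\psi\colon X\to S^1$ representing the meridional generator of $H^1(X;\Z)\cong\Z$ (it exists by Alexander duality, and is the honest replacement for your ``meridional angle,'' which is only defined near the knot). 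The fibration is a combination of the page coordinate $\theta$ with $\psi$ that is globally well defined, and agrees with the projection $\theta$ on $\partial B\times S^1$, precisely because the total twist is $k$ full turns; its fiber is then the pullback of $z\mapsto z^k$ along $\psi$, i.e.\ the $k$-fold cyclic cover of $X$, which glues to the untwisted outside region to form the punctured $k$-fold cyclic branched cover. Without this step, ``an angular coordinate that combines $\theta$ with the meridional angle'' is not a construction, and there is no submersion for Ehresmann's theorem to act on. (A smaller point: the fiber of the fibration of the complement is the \emph{punctured} branched cover; the branched cover itself is the closed fiber in the paper's terminology.)
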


In the case of the $(\pm 1)$-twist spin of a knot, the $1$-fold branched cover of $S^{3}$ branched along the knot is tautologically $S^{3}$.  Thus the closed fiber is a $3$-sphere, So in this case the resulting knotted surface is trivial. 

When we spin a knotted trivalent graph it is surprising that the result can be non-trivial. The following theorem is one of our main results.

\begin{theorem}\label{main}
There is a nontrivial foam obtained by a $(\pm1)$-twist spinning of a knotted $\theta$-curve. That is, a $(\pm1)$-twist spun KTG is not always unknotted.
\end{theorem}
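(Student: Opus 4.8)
The plan is to produce an explicit example and to detect its nontriviality by an invariant of the complement that is insensitive to none of the graph structure. The natural candidate is Kinoshita's $\theta$-curve from Fig.~\ref{8}, since it is the prototypical almost-trivial graph: it is knotted, yet every constituent knot is unknotted. First I would form the $(\pm1)$-twist spin of Kinoshita's $\theta$-curve according to the construction in the definition: break the $\theta$-curve along one edge, embed the resulting $(1,1)$-tangle-like object into the ball $B$, close it up with the $\mathsf{L}$-shaped arcs, and spin through a single full twist as $\theta$ ranges over $[0,2\pi)$. The output is a closed $2$-foam in $\R^4$ whose singular (trivalent) set comes from the two vertices of the $\theta$-curve swept around the spin axis, giving a pair of circles of triple points joined to sheets.

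The key observation is that Zeeman's fibration argument (Theorem~\ref{Zman}) is what forces triviality in the classical $2$-knot case, and it is exactly this argument that must fail for graphs. For a genuine knot the $(\pm1)$-twist spin has closed fiber the $1$-fold branched cover of $S^3$ branched over $K$, which is tautologically $S^3$, hence the surface is unknotted. For a $\theta$-curve, the analogue of ``branched cover of $S^3$ branched over the graph'' is no longer tautologically trivial: the $1$-fold cyclic branched cover is defined via a homomorphism from $\pi_1$ of the complement onto $\Z$, and the complement of a $\theta$-curve does not carry such a homomorphism in the way a knot complement does. So the structural reason the spin can stay knotted is that the fibering theorem has no unknotting consequence here. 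I would use this to motivate the computation rather than as the proof itself.

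To prove nontriviality rigorously I would compute a complement invariant. The cleanest route is the fundamental group $\pi_1(\R^4 \setminus F)$ of the spun foam $F$, obtained by a Wirtinger-style presentation read off from the broken-surface (movie) diagram of the spin. The twist spinning construction gives a presentation in terms of the constituent-knot data of the $\theta$-curve together with the twist, and Kinoshita's $\theta$-curve is precisely distinguished from the trivial planar $\theta$-curve by a subtle linking among its three edges (this is what makes it almost trivial rather than trivial). I would then show $\pi_1(\R^4 \setminus F)$ is nonabelian, or more robustly exhibit a nontrivial homomorphism to a finite group (for instance, a coloring of the foam diagram by a small nonabelian quandle or group that the planar $\theta$-curve cannot support). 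Since the $(\pm1)$-twist spin of the \emph{trivial} $\theta$-curve is unknotted and hence has a complement with the homotopy type of a standard wedge, any such finite nonabelian image certifies $F \not\simeq$ the trivial foam.

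The main obstacle I expect is the bookkeeping of the singular set: unlike a $2$-knot, the foam $F$ has circles of triple points coming from the spun vertices, so the movie/broken-surface diagram is more intricate and the Wirtinger presentation must correctly encode the relations imposed at the trivalent edges (each vertex relation of the $\theta$-curve becomes a relation among meridians that must be compatible around the full twist). Getting these relations right, and verifying that the candidate coloring or nonabelian quotient actually respects them while being unobtainable for the planar curve, is the delicate step; the twist itself contributes a conjugation/monodromy relation that I would need to track carefully so as not to accidentally abelianize or trivialize the invariant.
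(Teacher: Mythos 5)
There is a fatal problem with your choice of example. Kinoshita's $\theta$-curve is \emph{almost trivial}: every proper subgraph, in particular $\Gamma \setminus \{e\}$ for each edge $e$, is unknotted. The paper proves (Theorem~\ref{AT} and its corollary) that whenever $\Gamma \setminus \{e\}$ is unknotted, the $(\pm1)$-twist spin of $\Gamma$ with respect to $e$ is unknotted --- the Marumoto--Nakanishi crossing-change trick lets you undo all the crossings involving the broken edge for free, and the rest of the graph is planar by hypothesis. The paper even records explicitly, as an example of that corollary, that \emph{any} $(\pm1)$-twist spin of Kinoshita's $\theta$-curve is unknotted. So the foam you propose to analyze is provably trivial; no invariant of its complement, Wirtinger presentation, coloring, or otherwise, will detect anything. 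Your own observation that Kinoshita's curve is ``the prototypical almost-trivial graph'' is precisely the property that dooms it. The heuristic about the failure of Zeeman's fibration argument explains why nontriviality is not \emph{ruled out} for graphs, but it does not single out which graphs survive, and it points you to the wrong one here.

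The example actually needs a $\theta$-curve with a \emph{knotted} proper subgraph: the paper uses the curve of Fig.~\ref{5}, one of whose constituent knots is a trefoil, so that $\Gamma \setminus \{e_1\}$ admits a nontrivial Fox $3$-coloring. That coloring is then promoted to a nontrivial coloring of the twist-spun foam by the associated quandle $\Z/3 \times \Z_2$ of a $G$-family of quandles, which yields a nontrivial representation of the fundamental group of the complement and hence knottedness. Your proposed detection method (a coloring of the foam diagram by a small quandle, certifying a nonabelian quotient of $\pi_1$ of the complement) is essentially the paper's tool, and your worry about tracking the vertex relations and the monodromy of the twist is legitimate --- but the argument only has a chance of succeeding once the input graph retains a colorable knotted constituent after the edge used for spinning is deleted. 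Replace Kinoshita's curve with such a $\theta$-curve and the outline becomes viable.
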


We remind the reader (Chapter 9 \cite{Kam02}) that a knotted surface can berepresented in a normal form as a classical knot diagram with rectangular bands that are labeled $u$ or $d$ (for up or down). The bands represent saddle points of a critical level embedding and are resolved as indicated in Fig.~\ref{bandaid}. After all up or down resolutions, the resulting knot diagrams are unlinks. 

\begin{figure}[h]
\includegraphics[width=3cm]{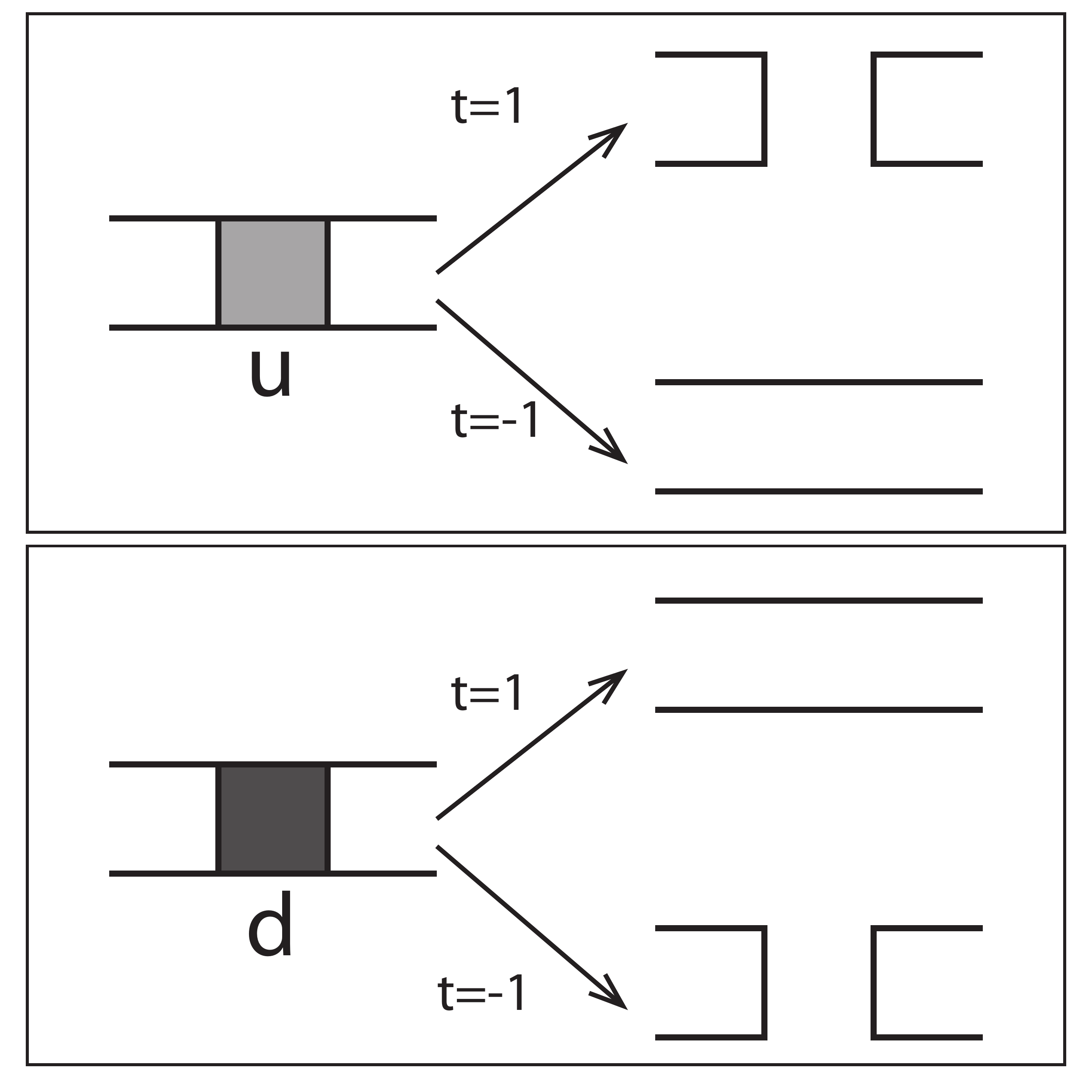}
\caption{Resolving up and down bands}
\label{bandaid}
\end{figure}

In 1991, Y. Marumoto and Y. Nakanishi \cite{MarNak91} gave an alternate proof of the triviality of $(\pm 1)$-twist spins by using the motion picture method. Since we will use their idea to prove our main theorem, we will look at their proof in some detail.


By Zeeman's twist spinning method, we can construct the $(\pm1)$-twist spin of $K$, denoted by $t_{\pm1}(K)$, from the given classical knot $K$. Since all of knots can be deformed into a trivial knot by performing a finite number of crossing changes, we only need to show that $t_{\pm1}(K)$ and $t_{\pm1}(K^{'})$ are ambiently isotopic when $K^{'}$ is the result of changing a crossing of $K$.

Let $*$ be the crossing at which the crossing change will occur. Pull the crossing close to the plane $\mathbb{R}^{2}$ that is fixed during spinning as indicated in case 1 of Fig.~\ref{2}. The crossing then becomes a minimal point. If the crossing is opposite, then we can deform this crossing to the first case crossing by using the second Reidemeister move as in case 2 of Fig.~\ref{2}. Then the first case's and second case's diagrams are locally the same nearby the fixed plane.

\begin{figure}[h]
\includegraphics[width=8cm]{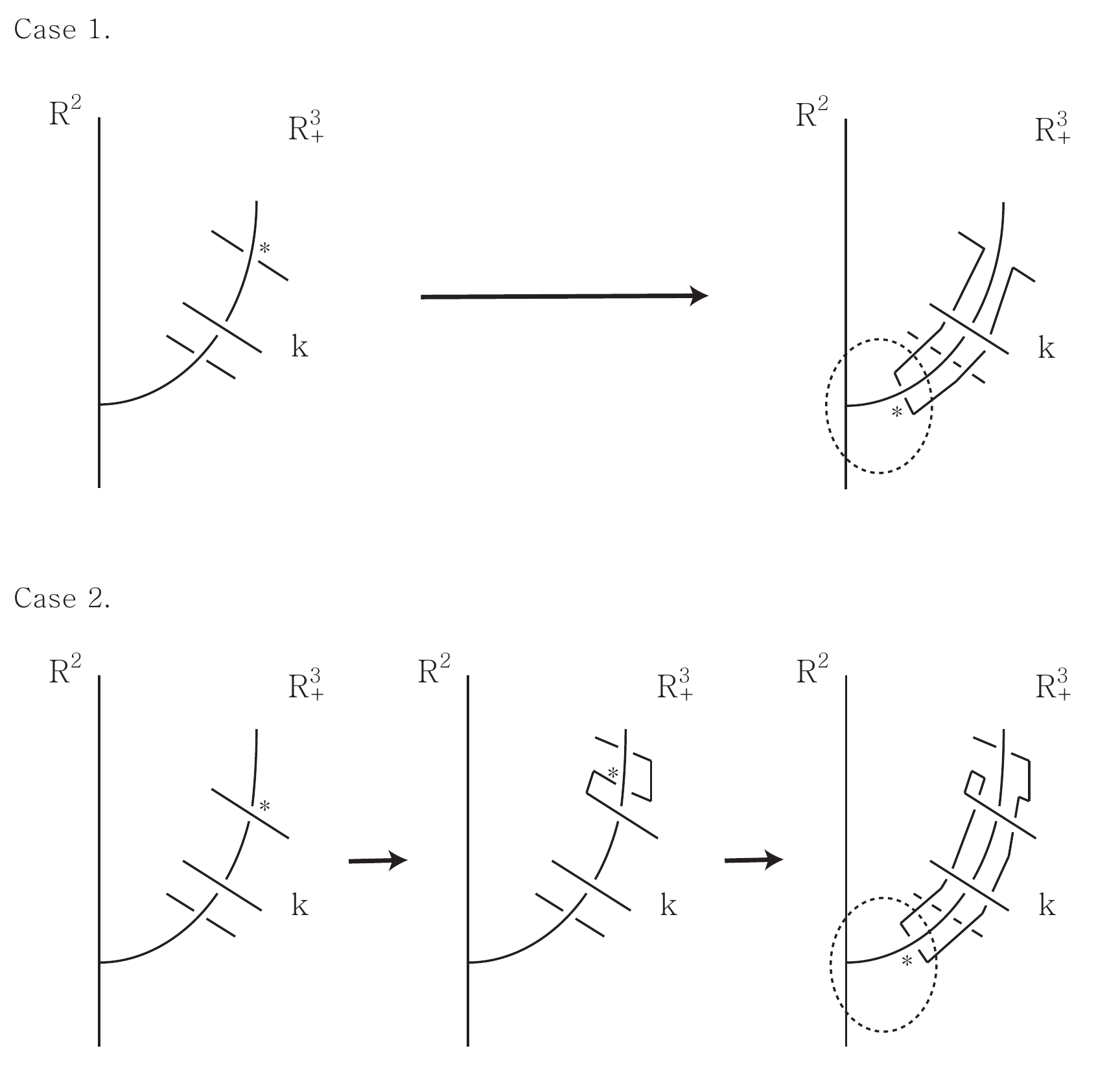}
\caption{Marumoto and Nakanishi's idea I}
\label{2}
\end{figure}

\begin{figure}
\includegraphics[width=12cm]{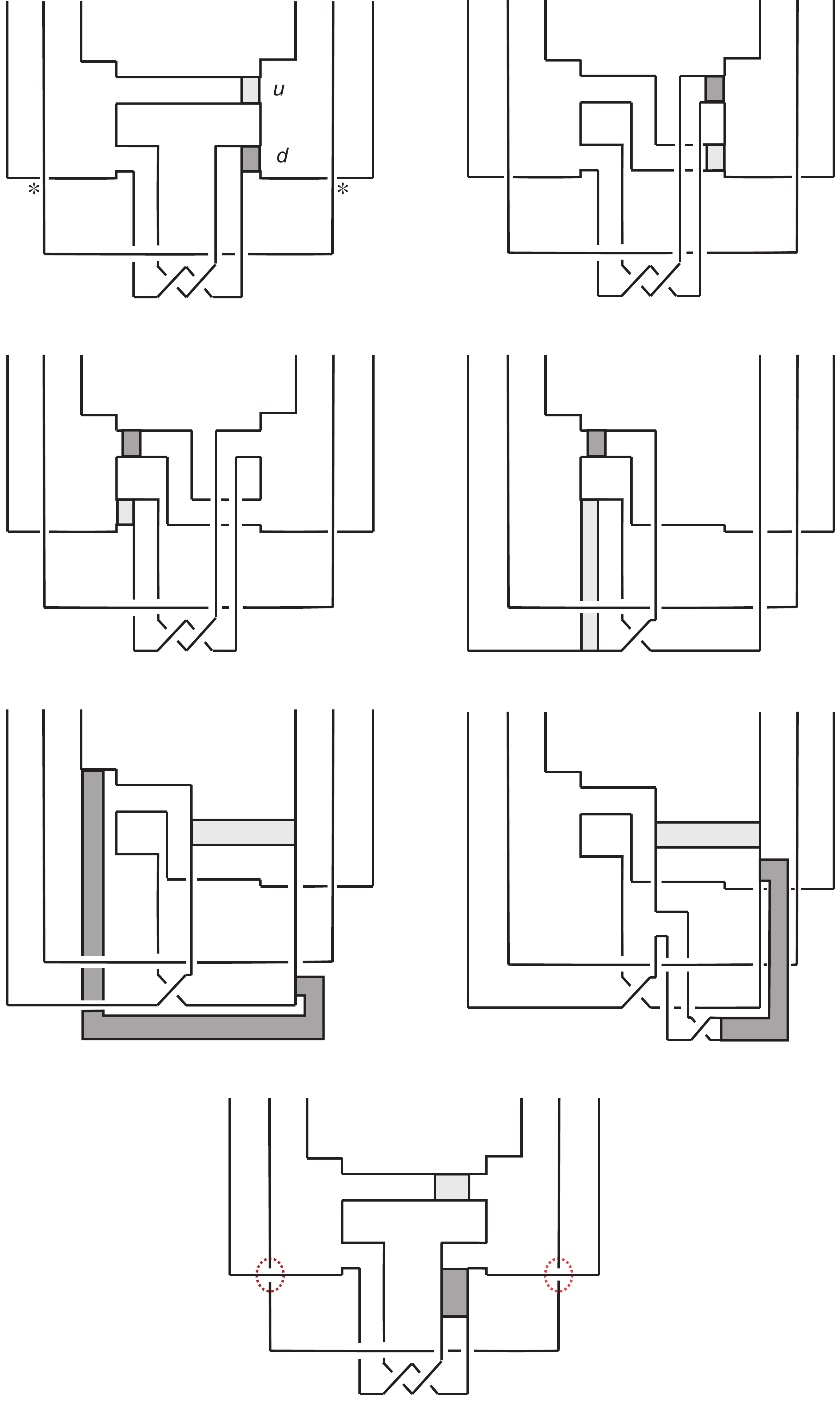}
\caption{Marumoto and Nakanishi's idea II}
\label{3}
\end{figure}

Then the normal form of the $(\pm1)$-twist spin of the diagram nearby the fixed plane is the first picture in Fig.~\ref{3}. The first picture and the second picture are ambiently isotopic. In the second picture, if we push the upper saddle band to the level of lower saddle band and the lower saddle band to the level of upper saddle band, then the saddle bands may switch their positions, and the saddle bands now can stretch up and down. We can deform the third picture to the last one by an ambient isotopy of $\mathbb{R}^{4}$. In the last normal form in Fig.~\ref{3}, we see that the crossings $*$ are changed. Therefore the crossing changes can be realized by an ambient isotopy of $\mathbb{R}^{4}$.

We next consider an important difference between spun knots and spun KTGs. Consider a knot $K$ in $S^{3}$, and take a small $3$-dimensional ball $B^{3}$ on the knot $K$. Then we can consider the closure of $S^{3} \setminus B^{3}$ as one page in an open book decomposition. Since a knot $K$ is an $1$-dimensional manifold, the $3$-ball on the knot $K$ can travel anywhere on the knot $K$. So spinning the classical knot does not depend on which arc that is split; 
see Fig.~\ref{4}. But since a KTG has vertices, the $3$-ball may not move anywhere on the KTG. \emph{Therefore spinning the KTG  depends upon the splitting arc.}

\begin{figure}[h]
\includegraphics[width=6.0cm]{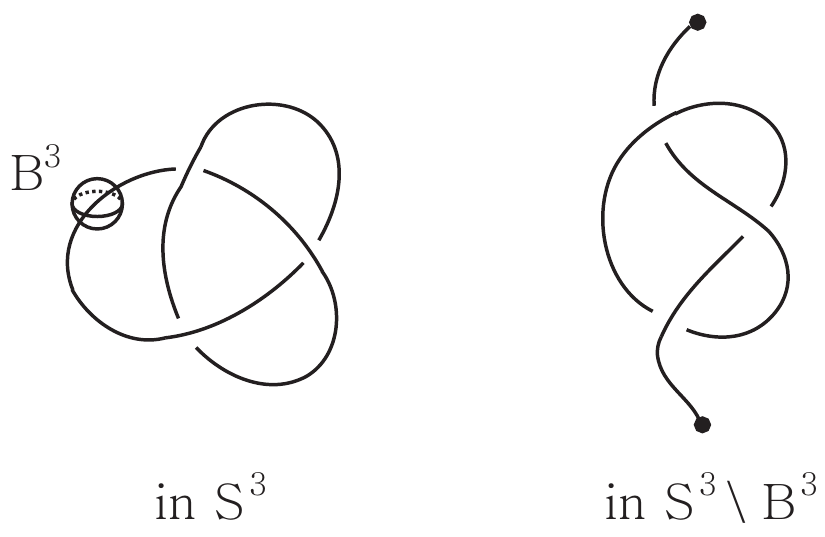}
\caption{Spinning the knot does not depend on the splitting arc.}
\label{4}
\end{figure}

When a KTG is spun or twist spun, it has constituents that are analogous to the constituents of a KTG. A constituent is either a knotted sphere or a knotted torus.

\begin{theorem}
Every foam obtained by a $(\pm1)$-twist spin of a knotted $\theta$-curve has exactly two trivial $S^{2}$-constituent components.
\end{theorem}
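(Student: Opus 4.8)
The plan is to analyze the three constituent knots of the $\theta$-curve and to track how each behaves under the twist-spin construction, separating those that meet the rotation axis from those that do not. First I would fix notation: write the $\theta$-curve with vertices $v_1,v_2$ and edges $e_1,e_2,e_3$, and suppose without loss of generality that the spin is taken by breaking the edge $e_1$, embedding the resulting $(1,1)$-tangle into the ball $B$, and joining its endpoints by the $\mathsf{L}$-shaped arcs to the two fixed points $p_\pm=(0,\pm3,0)$ in the plane $z=0$. With this set-up the two vertices $v_1,v_2$ and the edges $e_2,e_3$ all lie inside $B$, hence off the rotation axis $\{z=0\}$, whereas $e_1$ is precisely the arc that runs out to the fixed points $p_\pm$. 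The choice of broken edge is immaterial, since in a $\theta$-curve each edge plays the same combinatorial role.

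Next I would recall that a $\theta$-curve has exactly three constituent knots, $K_{12}=e_1\cup e_2$, $K_{13}=e_1\cup e_3$, and $K_{23}=e_2\cup e_3$, and that each constituent component of the spun foam is the surface swept out by the corresponding constituent knot. The key geometric dichotomy is then the following: a constituent knot that contains the broken edge $e_1$ meets the axis at the two fixed points $p_\pm$, so spinning its underlying arc (with both endpoints pinned on the axis) sweeps out a $2$-sphere having $p_\pm$ as poles; a constituent knot that does not contain $e_1$ lies entirely inside $B$, off the axis, so spinning it sweeps out a torus. Since $e_1$ belongs to exactly two of the three constituent knots, namely $K_{12}$ and $K_{13}$, exactly two constituents are spheres and the remaining one, $K_{23}$, is a torus.

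It then remains to verify triviality of the two spheres. I would observe that the sphere swept out by $K_{1j}$ (for $j=2,3$) is, as an embedded surface in $\R^4$, exactly the $(\pm1)$-twist spin of the classical knot $K_{1j}$: its arc is a $(1,1)$-tangle whose closure is $K_{1j}$, and it undergoes the same $\pm1$ full twists inside $B$. By Theorem~\ref{Zman} together with the remark following it, the $1$-fold branched cover of $S^3$ branched over a knot is tautologically $S^3$, so the closed fiber is a $3$-sphere and each such twist-spun sphere is unknotted. Hence both sphere-constituents are trivial, yielding exactly two trivial $S^2$-constituent components, as claimed.

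The main obstacle I expect is the third step: making rigorous the sphere-versus-torus dichotomy inside the open-book/twist-spin picture, that is, showing carefully that an arc with both endpoints on the fixed axis sweeps a sphere realizing precisely the twist spin of its closure, while an off-axis circle sweeps a genuine (unpinched) torus. One must also check that the sheets shared between constituents, namely the common spins of $e_1,e_2,e_3$ along the two singular circles traced by $v_1$ and $v_2$, do not alter the ambient embedding type of each individual constituent surface, so that Theorem~\ref{Zman} may legitimately be applied to each sphere on its own.
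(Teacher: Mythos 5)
Your proposal is correct and follows the same overall structure as the paper's proof: both identify the three constituent knots of the $\theta$-curve, observe that exactly the two constituents containing the broken edge sweep out spheres (the third, lying entirely off the axis, sweeping out a torus), and reduce the theorem to the triviality of $(\pm1)$-twist spins of classical knots. The one genuine difference is in how that last step is justified. You spin first and then invoke Zeeman's theorem (Theorem~\ref{Zman}) as a black box on each sphere constituent separately; the paper instead deforms the broken KTG \emph{before} spinning, using Marumoto--Nakanishi's crossing-change technique, until each constituent containing the broken edge is an unknotted $(1,1)$-tangle, after which triviality of the two spheres is immediate. The two routes are essentially equivalent --- Marumoto--Nakanishi's argument is itself a proof of the $(\pm1)$ case of Zeeman's theorem --- but the paper's version produces an explicit planar picture of the deformed graph (Fig.~\ref{7}) that is reused in the subsequent example, while yours is shorter and makes the sphere-versus-torus dichotomy more explicit. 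The obstacle you flag at the end is not a real one: a constituent component of the spun foam is, as a point set in $\mathbb{R}^{4}$, literally the $(\pm1)$-twist spin of the corresponding constituent $(1,1)$-tangle, so Theorem~\ref{Zman} applies to it directly, with no need to worry about the sheets it shares with the rest of the foam.
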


\begin{proof}
Let $\Gamma$ be a knotted $\theta$-curve. Label its edges $e_{1}$, $e_{2}$, and $e_{3}$. There are three constituent knots of $\Gamma$; these are $e_{1} \cup e_{2}$, $e_{2} \cup e_{3}$, and $e_{3} \cup e_{1}$. Choose an edge $e_{i}$ of the knotted $\theta$-curve $\Gamma$, cut at a point on $e_{i}$, and attach the end points to the plane $\mathbb{R}^{2}$. Then by using Marumoto and Nakanishi's technique, we can deform this broken KTG to satisfy the following conditions: (See Fig.~\ref{7})

\begin{enumerate}
\item There are no crossings on the broken edges $e_{i}$.
\item For each $j \neq i$, $e_{j}$ is an unknotted edge.
\end{enumerate}

Denote this deformed KTG by $\widetilde{G}$ and the $(\pm1)$-twist of $\widetilde{G}$ with respect to $e_{i}$ by $t_{e_{i}}(\widetilde{G})$. Then the $(\pm1)$-twist spins of $e_{1} \cup e_{2}$, $e_{2} \cup e_{3}$, and $e_{3} \cup e_{1}$ are constituent components of $t_{e_{i}}(\widetilde{G})$. Since $e_{i} \cup e_{j}$ is a unknotted arc for each $j\neq i$, $(\pm1)$-twist spinning $e_{i} \cup e_{j}$ makes a trivial $S^{2}$-constituent component of $\Gamma$.
\end{proof}

\begin{figure}[h]
\includegraphics[width=4cm]{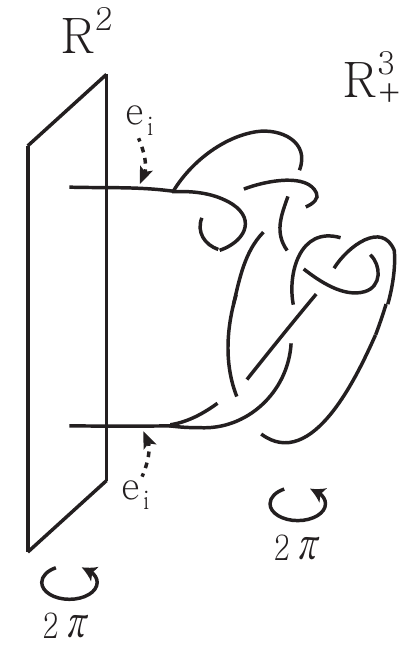}
\caption{A deformed broken KTG}
\label{7}
\end{figure}

\begin{example}
Let $\Gamma$ be the knotted $\theta$-curve depicted in  Fig.~\ref{5}. We take the edge $e_{1}$ of $\Gamma$ and construct a deformed KTG with respect to the edge $e_{1}$ as in the following steps. We first pull the edge $e_{3}$ close to the plane and change the crossing by using Marumoto and Nakanishi's technique. This process gives the second diagram. The second, third, and fourth pictures are ambiently isotopic. In the forth picture, we see that $(\pm1)$-twist spinning $e_{1} \cup e_{2}$ and $e_{1} \cup e_{3}$ construct two trivial $S^{2}$-constituent components.
\end{example}

\begin{figure}[h]
\includegraphics[width=10cm]{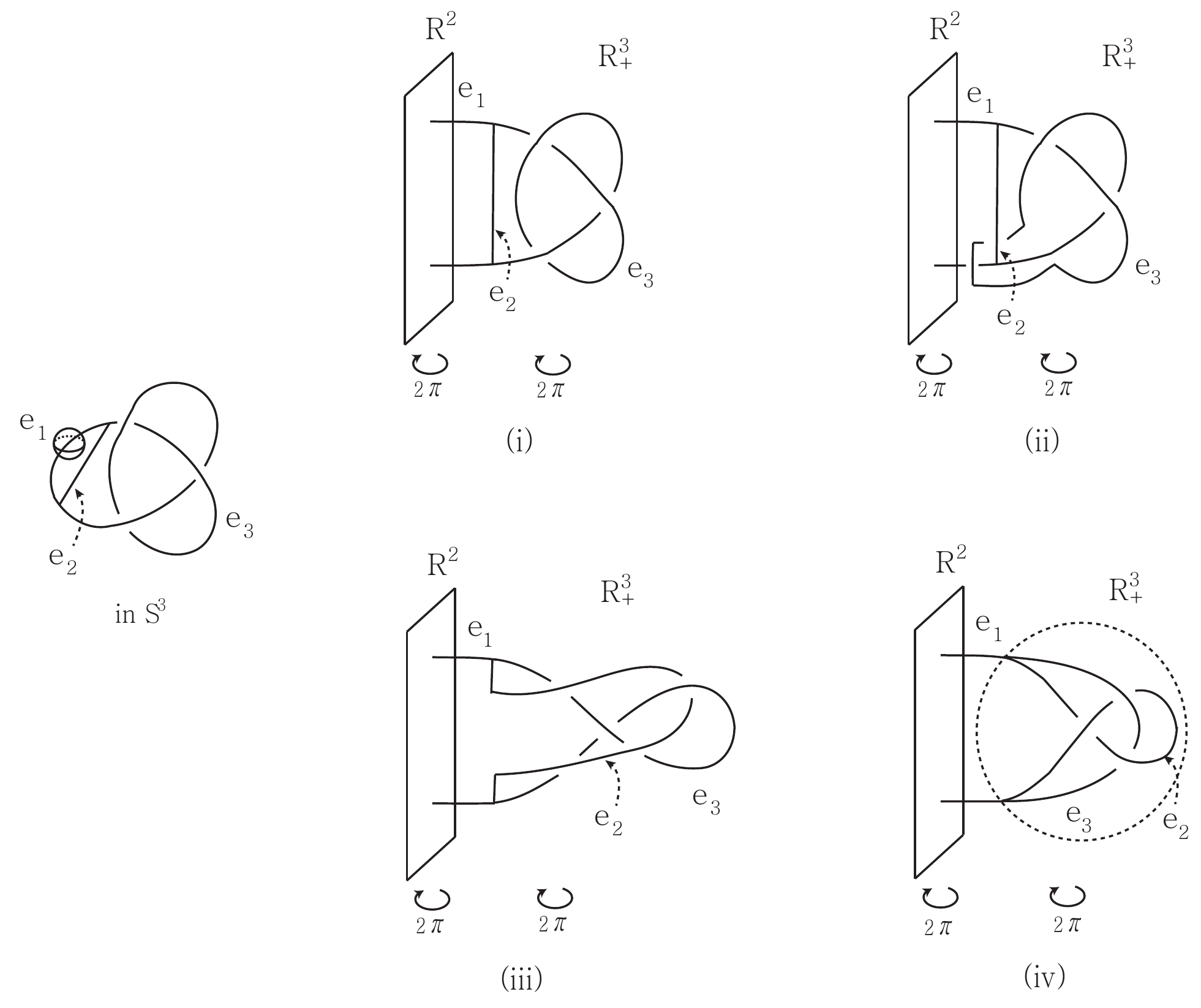}
\caption{A $(\pm1)$-twist spin of a knotted $\theta$-curve has exactly two trivial $S^{2}$-constituent components.}
\label{5}
\end{figure}

We are going to prove that there is a KTG such that a $(\pm1)$-twist spin of the KTG is a knotted foam in $\mathbb{R}^{4}$. First we need to recall from \cite{IIJO} the definition of a  $G$-family of quandles.

\begin{definition} Let $G$ denote a group and $X$ a set. Suppose that for each $g\in G$, there is a binary operation $\lt_g:X \times X \rightarrow X$ that satisfies the properties:
\begin{itemize}
\item $x \lt_g x =x$ for all $x\in X$ and $g\in G$,
\item $(x \lt_g y) \lt_h y= x \lt_{gh} y$ for all $x,y \in X$ and $g,h\in G$,
\item $x\lt_1 y = x$ for all $x,y \in X$ where $1\in G$ denotes the identity, and
\item $(x \lt_ g y) \lt_h z = (x \lt_h z) \lt_{{h^{-1} g h}} (y \lt_h z)$ for all $x,y,z \in X$ and $g,h\in G$.
\end{itemize} Then $(X, \{ \lt_g: g\in G \})$ is called a {\it $G$-family of quandles}. In this case there is a quandle structure on $X\times G$ that is defined by $(x,g)\lt(y,h)= (x \lt_h y, h^{-1}g h)$. This structure is called {\it the associated quandle of a $G$-family of quandles.} \end{definition}

The concept of $G$-family of quandles was invented to color handlebody knots and KTGs, but the notion of coloring extends easily to colorings of knotted foams.
When a foam is given, for example, as a movie of a KTG, then the coloring condition is that each still in the movie is colored as a KTG, and that the colorings obey consistency through Reidemeister moves. 

Here we are only interested in the colorings by the quandle $Q=\Z/3 \times \Z_{2}$, with quandle operation 
$$(x, \epsilon) \lt (y,1)=  (2y-x, \epsilon) \quad (x,\epsilon) \lt (y, 0)= (x, \epsilon).$$ 
We think of the set $\{0,1,2\} = \Z/3$ as the set of colors $\{R,G,B\}$ which are stand-ins for {\it Red, Green}, and {\it Blue.} The elements of $Q$ are of the form $(C, 0)$ or $(C,1)$ where $C\in \{R,G,B\}$. For each color $C$, abuse notation and  let $C:=(C,1)$ and $c:=(C,0)$. In our diagrams, an arc that is colored with $(C,0)=c$ is indicated as a dotted arc, and those colored by $(C,1)=C$ are indicated by solid arcs. 

The coloring conditions for trivalent vertices and crossings are as follows:
\begin{itemize}
\item the first component of the colors $(C,\epsilon)$  at a vertex are all the same;
\item either all three arcs are dotted, or exactly one is dotted;
\item If a solid arc with color $(C,1)$ crosses over another arc of a different color (whether or not the under arc be solid or dotted), the colors change on the under-arc;
\item the colors on an under-arc remain unchanged if crossed over by a dotted arc or if crossed over by a solid arc of the same color. \end{itemize}
Figure~\ref{coloringcond} indicates these conventions.  

\begin{figure}[h]
\includegraphics[width=6cm]{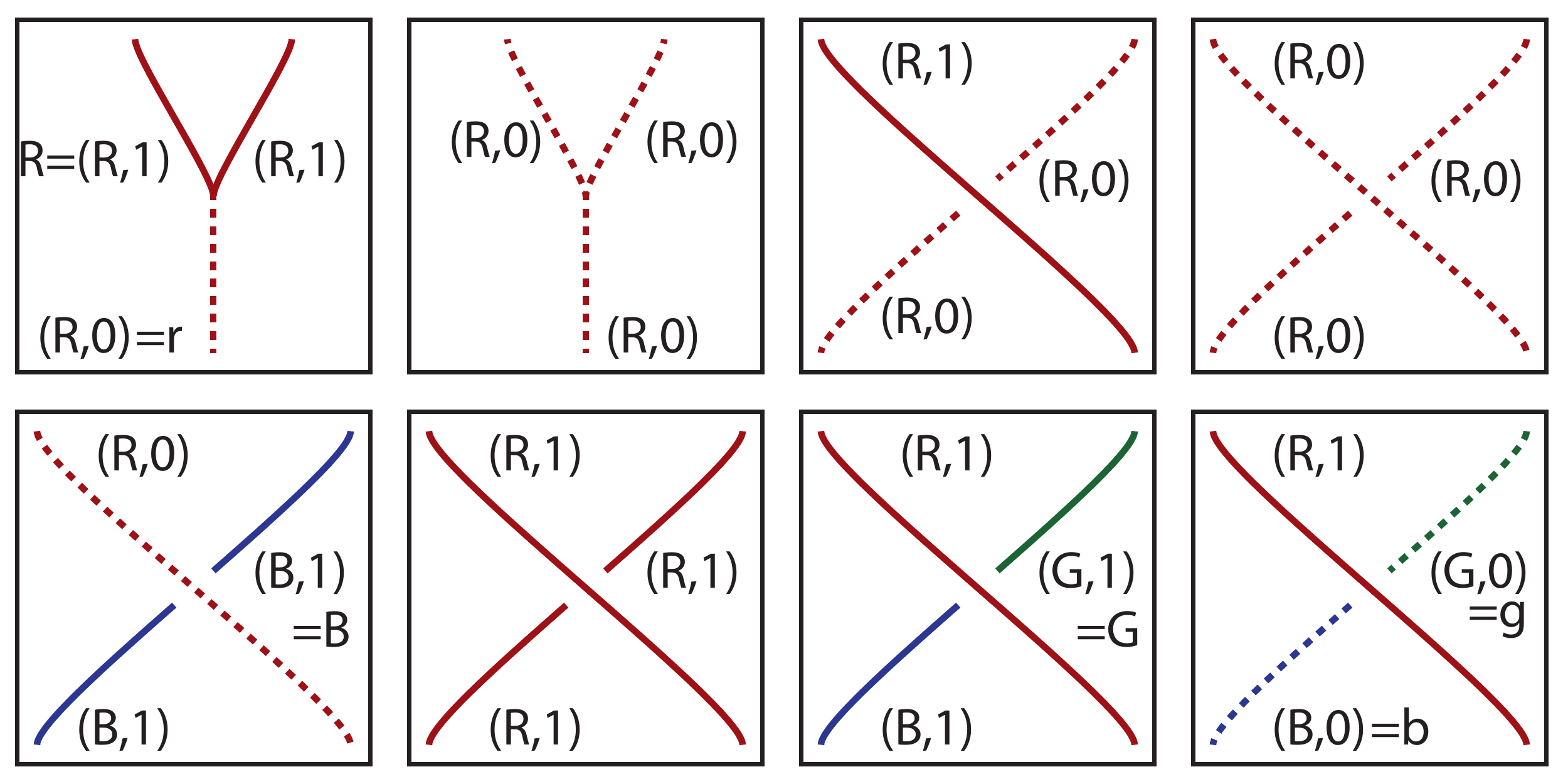}
\caption{Coloring conditions at crossings}
\label{coloringcond}
\end{figure}

\begin{theorem}
If there exists a non-trivial coloring of the given foam by the associated quandle of a $G$-family of quandles, then the foam is knotted.
\end{theorem}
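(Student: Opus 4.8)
The plan is to prove the contrapositive: if the foam $F$ is unknotted, then every coloring of $F$ by the associated quandle of a $G$-family of quandles is trivial, in the sense that the $X$-component of the colors (the $\{R,G,B\}$-coordinate in the running case $Q=\Z/3\times\Z_2$) is constant on each connected component. Equivalently, I will declare a coloring \emph{nontrivial} when its $X$-component assumes at least two values, and show that such colorings cannot occur on an unknotted foam. Note that this is the right notion to isolate: the $\Z_2$ (solid/dotted) coordinate can vary legitimately even on a crossingless diagram, so triviality must be measured on the $\Z/3$-factor rather than on all of $Q$.

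First I would establish that the set of colorings is an ambient isotopy invariant. Since a foam is presented as a movie of colored KTG diagrams, it suffices to check that colorings are carried bijectively to colorings under each elementary move: (i) the Reidemeister moves on a single still, which hold by construction because the crossing rule $(x,g)\lt(y,h)=(x\lt_h y,\,h^{-1}gh)$ is exactly the quandle structure on the associated quandle; (ii) the trivalent-vertex (IH / Pachner-type) moves for KTGs, which hold by the $G$-family vertex relations, namely equality of the $X$-components together with the product (parity) condition on the $G$-labels; and (iii) the movie/Roseman-type moves relating two movies of ambiently isotopic foams, including those that slide a critical point (birth, death, saddle) past a crossing or past a singular edge. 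Granting invariance, the existence of a nontrivial coloring depends only on the ambient isotopy class of $F$.

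Next I would analyze an unknotted foam directly. By definition $F$ is ambiently isotopic to a foam $F'$ embedded in $\R^3\subset\R^4$, so the projection $\R^4\to\R^3$ forgetting the last coordinate is injective on $F'$; hence the diagram of $F'$ has no double-point curves, only the intrinsic singular $Y^1$- and $Y^2$-type strata. In the associated quandle the $X$-component of the color on a sheet can change only where that sheet passes beneath another sheet, that is, only along a double-point curve, since passing under a sheet whose $G$-label is the identity leaves $X$ fixed by the $G$-family axiom $x\lt_1 y=x$. Across the smooth sheets and across the singular strata the vertex-type relations force the $X$-components of adjacent sheets to coincide. Because $F'$ has no double-point curves, the $X$-component is therefore locally constant and hence constant on each connected component, so $F'$, and thus $F$, admits only trivial colorings. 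This is the contrapositive of the theorem.

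The main obstacle I anticipate is step (iii): verifying coloring-invariance under a full generating set of movie/foam moves, since, unlike the purely one-dimensional Reidemeister and vertex moves, these require checking compatibility of the $G$-family structure along the singular two-dimensional strata and at the cone-type vertices $Y^2$. A secondary but essential point is to formulate the argument so that it detects precisely the non-constancy of the $\Z/3$-component, rather than appealing to monochromaticity in all of $Q$; otherwise the presence of valid solid/dotted variation on a planar diagram would make the invariant appear to vanish. Once these two points are handled, the contrapositive argument above completes the proof.
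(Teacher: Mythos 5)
Your proposal is essentially correct but follows a genuinely different route from the paper's. The paper disposes of the theorem in a single sentence: a coloring by the associated quandle determines a representation of the fundamental group of the foam's complement into the associated group of the quandle (meridians map to the colors of the sheets), a non-trivial coloring yields a non-trivial representation, and an unknotted foam cannot support one. You instead prove the contrapositive diagrammatically: colorings are invariant under the movie/Roseman-type moves, and an unknotted foam admits a crossing-free diagram on which the $X$-component of the color is forced to be locally constant by the vertex relations and can only change along double-point curves, of which there are none. Your route buys two things: it pins down precisely what ``non-trivial'' must mean (non-constancy of the $X$-component rather than of the full $Q$-color, since the $\Z_{2}$-coordinate legitimately varies at the vertices of a planar diagram --- a point the paper leaves implicit), and it avoids analyzing the fundamental group of the complement of an unknotted foam, which is not cyclic and therefore admits non-abelian representations, so that the paper's one-line argument itself needs the same care about which representations count as trivial. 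The cost is that your step (iii) --- invariance of colorings under a full generating set of movie moves for foams --- is the technical heart of your argument and is only asserted; it is the content of the Carter--Ishii preprint cited in the bibliography, so you should either invoke that reference or carry out the verification. One further point to make explicit: the move-by-move bijections on colorings must be seen to preserve non-triviality, which follows from the idempotency axiom $x \lt_{g} x = x$, since all colors created inside a move disk from $X$-constant inputs remain $X$-constant, so the bijections restrict to bijections on trivial colorings.
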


\begin{proof} Such a coloring gives rise to a non-trivial representation of the fundamental group of the complement of the foam into the associated group of the quandle.  
\end{proof}

\begin{theorem}
There is a nontrivial foam obtained by a $(\pm1)$-twist spinning of a knotted $\theta$-curve. That is, a $(\pm1)$-twist spun KTG is not always unknotted.
\end{theorem}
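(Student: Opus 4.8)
The strategy is to reduce the statement to an application of the coloring theorem proved just above: it suffices to exhibit a single knotted $\theta$-curve whose $(\pm1)$-twist spin carries a non-trivial coloring by the associated quandle $Q=\Z/3 \times \Z_2$ of the displayed $G$-family. Once such a coloring is produced, the coloring theorem immediately yields a non-trivial representation of the fundamental group of the complement, so the resulting foam cannot be ambiently isotopic into $3$-space and is therefore knotted. This route is available precisely because, as emphasized above, twist spinning a KTG depends on the splitting arc; the Marumoto--Nakanishi argument that trivializes the $(\pm1)$-twist spin of a classical knot breaks down in the presence of trivalent vertices, so there is no a priori obstruction to the foam being knotted.

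For the example I would take Kinoshita's $\theta$-curve $\Gamma$ of Fig.~\ref{8}. It is almost trivial: each constituent knot $e_i \cup e_j$ is unknotted, so by the theorem above its two $S^2$-constituents are trivial, and its constituents cannot detect any knotting. Hence any non-triviality of the spin must be carried by the interaction of the three edges at the two trivalent vertices rather than by a constituent. This is exactly the kind of non-triviality that ordinary quandle colorings cannot see but that the $G$-family coloring, through its $\Z_2$-component and the solid/dotted bookkeeping of Fig.~\ref{coloringcond}, is designed to detect.

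The main work, and the step I expect to be the chief obstacle, is to realize the $(\pm1)$-twist spin of $\Gamma$ explicitly as a movie of KTGs and to propagate a coloring through it. Following the twist-spinning definition, I would break $\Gamma$ along an edge $e_i$, embed the broken graph in the ball $B$, attach the {\sf L}-shaped arcs to the fixed plane, and let the page parameter sweep one full twist; via the normal form recalled after the statement of Theorem~\ref{main}, this presents the foam as a classical KTG diagram with labelled saddle bands. I would then assign colors to the arcs of the initial still, matching a chosen coloring of $\Gamma$, and track them frame by frame. As the page parameter runs through the full twist, the colors on the rotating strands are acted on by the dihedral operation $x \mapsto 2y-x$ coming from the nonidentity element of $\Z_2$; the delicate point is to check that this action, combined with the two vertex conditions (equal first components and the all-dotted-or-exactly-one-dotted parity) and with the crossing rules, admits a globally consistent solution in which the three edges are forced to carry genuinely distinct labels, and that the births, deaths, and saddle bands produced by the {\sf L}-arcs each respect these conditions.

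Verifying consistency through the twist region and the saddle points, and confirming that the resulting coloring is non-constant and hence non-trivial, is the technical heart of the argument; it is exactly here that the vertices of $\Gamma$ prevent the collapse to the trivial coloring that forces the $(\pm1)$-twist spin of a classical knot to be unknotted. With a non-trivial coloring of the foam in hand, the coloring theorem finishes the proof.
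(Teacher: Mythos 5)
Your overall strategy---produce a non-trivial coloring of the twist-spun foam by the associated quandle of the $G$-family and invoke the coloring theorem---is exactly the paper's strategy. But your choice of example is fatally wrong. You propose Kinoshita's $\theta$-curve, which is almost trivial, and the paper itself proves (Theorem~\ref{AT} and its corollary, followed by an explicit example) that \emph{every} $(\pm1)$-twist spin of an almost trivial graph, and of Kinoshita's $\theta$-curve in particular, is unknotted. So the ``globally consistent solution in which the three edges are forced to carry genuinely distinct labels'' that you hope to find simply does not exist for your example: an unknotted foam admits only trivial colorings. The intuition in your second paragraph---that the $\Z_2$-component of the $G$-family coloring might detect knotting of the spin that is invisible to the constituents---is precisely what Theorem~\ref{AT} rules out, since almost triviality means every $\Gamma\setminus\{e\}$ is unknotted and hence every splitting edge yields a trivial spin.

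The paper's proof instead takes the knotted $\theta$-curve of Fig.~\ref{5}, which has a \emph{knotted} constituent: deleting the splitting edge $e_1$ leaves a knot (a trefoil-like constituent $e_2\cup e_3$) admitting a non-trivial Fox $3$-coloring. That Fox coloring is promoted to a coloring of the spun foam by $Q=\Z/3\times\Z_2$: the split edge $e_1$ and the spinning apparatus carry the ``dotted'' colors $(C,0)$, which act trivially at crossings, while the knotted constituent carries the ``solid'' colors $(C,1)$ inherited from the Fox coloring; consistency through the movie then follows because dotted arcs never change the colors of arcs they cross over. (This is made precise in the later theorem showing that if $\Gamma\setminus\{e\}$ is knotted and $n$-colorable, the spin with respect to $e$ is non-trivially colorable.) Separately, even if you had chosen a workable example, your proposal defers the entire verification of the coloring---which you yourself identify as ``the technical heart''---so as written it establishes nothing.
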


\begin{proof}
We consider the knotted $\theta$-curve $\Gamma$ in Fig.~\ref{5}. We choose the edge $e_{1}$ of $\Gamma$ to construct a $1$-twist spun $\Gamma$ with respect to the edge $e_{1}$. The $1$-twist spinning process can be illustrated schematically as Fig.~\ref{6}, and we can find a non-trivial coloring of $1$-twist spun $\Gamma$ by the associated quandle $Q=\Z/3\times \mathbb{Z}_{2}$. Similarly, a $(-1)$-twist spun $\Gamma$ with respect to the edge $e_{1}$ also has a non-trivial coloring. Therefore $(\pm1)$-twist spinning of $\Gamma$ with respect to the edge $e_{1}$ is a knotted foam.
\end{proof}

\begin{figure}[h]
\includegraphics[width=8cm]{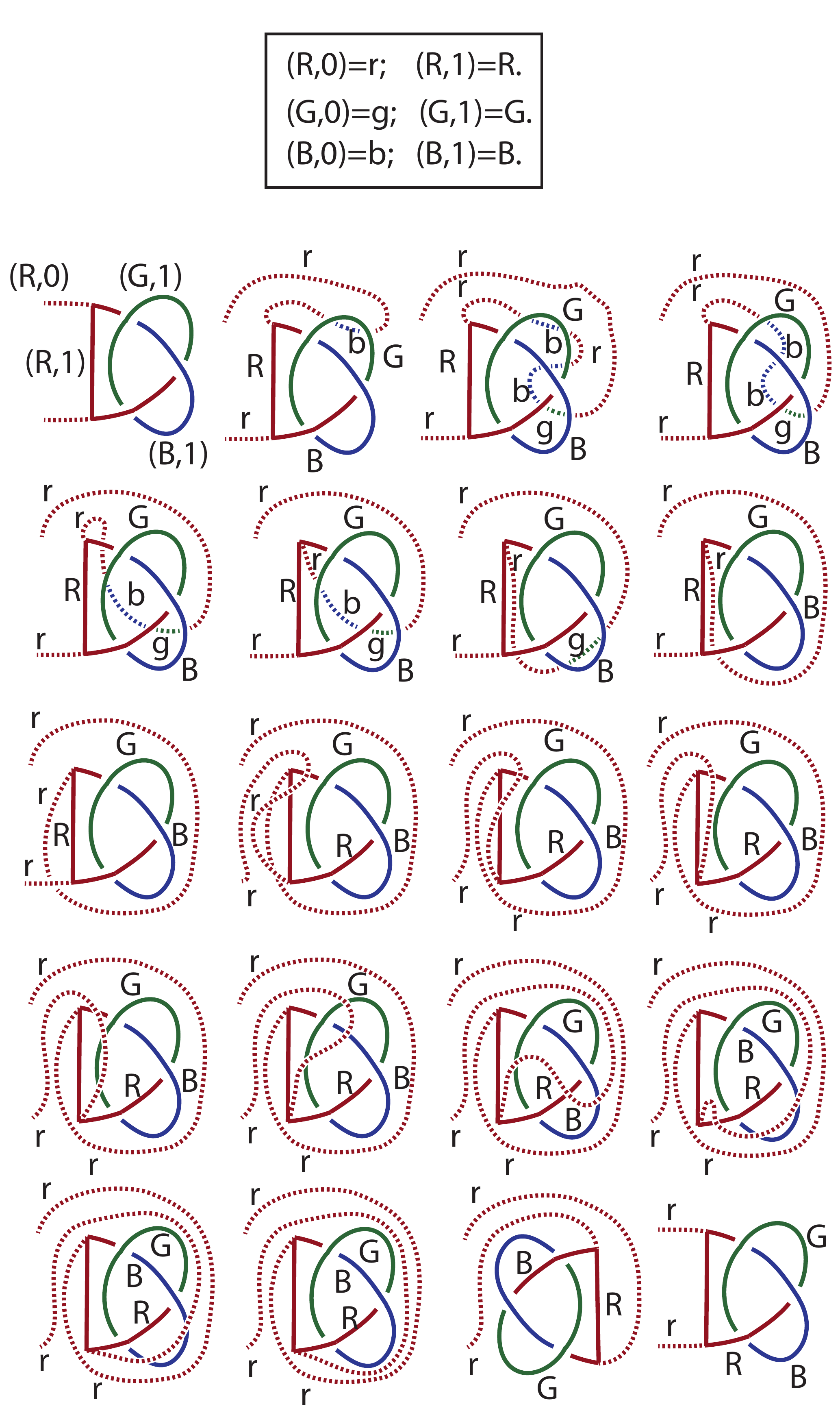}
\caption{A $(\pm1)$-twist spun KTG is not always unknotted.}
\label{6}
\end{figure}

\begin{theorem}
\label{AT}
Let $\Gamma$ be a spatial graph and $e$ an edge of $\Gamma$. Assume that $\Gamma \setminus \{e\}$ is unknotted. Then the $(\pm1)$-twist spin of $\Gamma$ with respect to the edge $e$ is unknotted.
\end{theorem}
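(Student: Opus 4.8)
The plan is to adapt the Marumoto--Nakanishi crossing-change argument to the broken-edge setting, using the hypothesis on $\Gamma \setminus \{e\}$ precisely to guarantee that every remaining crossing can be realized as a crossing change by an ambient isotopy of $\R^4$. First I would invoke the hypothesis directly: since $\Gamma \setminus \{e\}$ is unknotted, there is an ambient isotopy of $\R^3$ carrying $\Gamma \setminus \{e\}$ into the fixed plane $\R^2$. Applying this isotopy to all of $\Gamma$, I arrange that $\Gamma \setminus \{e\}$ lies flat in the plane while the edge $e$ becomes an arc joining two vertices that now sit in $\R^2$. In the resulting diagram the only crossings are those in which $e$ passes over or under the planar part $\Gamma \setminus \{e\}$, together with the self-crossings of $e$; in particular, every crossing of the diagram involves a strand of $e$.

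Next I set up the $(\pm1)$-twist spin with respect to $e$ in the open book decomposition of $\R^4$: cut $e$ at an interior point and attach the two free ends to the binding plane $\R^2$, so that the broken edge terminates on the axis. The central observation is that, because each crossing meets $e$ and $e$ runs down to the fixed plane, each such crossing can be slid along $e$ and pulled close to the fixed plane exactly as in the first step of the Marumoto--Nakanishi argument (Fig.~\ref{2}), where it becomes a minimal point. Their normal-form manipulation (Fig.~\ref{3}) then shows that, for a $(\pm1)$-twist spin, switching such a crossing is realized by an ambient isotopy of $\R^4$. Performing these crossing changes one at a time, I would first remove all self-crossings of $e$ so that $e$ becomes an unknotted arc, and then remove all crossings between $e$ and the planar part, so that $e$ is unlinked from $\Gamma \setminus \{e\}$.

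After these isotopies the graph $\Gamma$ is carried, up to ambient isotopy of the twist spin, to a standardly embedded graph: $\Gamma \setminus \{e\}$ lies in the plane and $e$ is an unknotted, unlinked arc that can be pushed flat against the plane. It then remains to treat this base case, namely that the $(\pm1)$-twist spin with respect to $e$ of such a trivially embedded graph is the trivial foam. Here the broken graph is a trivial tangle in the ball $B$, and spinning a trivial tangle through a single $(\pm1)$ full twist yields, up to isotopy, an embedding lying essentially in one page of the open book, hence an unknotted foam --- the same phenomenon that makes $t_{\pm1}$ of the unknot a trivial $2$-knot.

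I expect the main obstacle to be the justification that every crossing can indeed be pulled to the fixed plane. Unlike the knot case, a KTG has trivalent vertices that obstruct sliding a crossing freely along the diagram, which is exactly why the earlier emphasis that \emph{spinning the KTG depends upon the splitting arc} matters. The hypothesis that $\Gamma \setminus \{e\}$ is unknotted is what rescues the argument: after flattening $\Gamma \setminus \{e\}$ the only crossings left lie on $e$, and $e$ is precisely the arc broken open onto the axis, so the vertices are never in the way of pulling a crossing down along $e$. A secondary point requiring care is the base case, where one must verify that no residual twisting of the foam survives once the tangle has been made trivial.
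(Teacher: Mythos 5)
Your proposal is correct and follows essentially the same route as the paper: both trivialize the broken edge $e$ by Marumoto--Nakanishi crossing changes, which apply exactly because every such crossing can be dragged along $e$ to the binding plane without meeting a vertex, and then reduce to the twist spin of an unknotted spatial graph. The only difference is cosmetic --- you flatten $\Gamma \setminus \{e\}$ first so that all remaining crossings involve $e$, whereas the paper changes crossings while tracing $e$ from the cut endpoints and invokes the unknottedness of $\Gamma \setminus \{e\}$ only at the end.
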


\begin{proof}
To construct $(\pm1)$-twist spun $\Gamma$ with respect to the edge $e$, we cut the edge $e$, and pull the endpoints close to the plane. When pulling the endpoints we can make the endpoints cross over the edges transversely but not cross over the vertices since $\Gamma$ is finite. Now, choose one of the endpoints, trace on the edge until we come to a crossing. If we trace on the under-strand at this crossing, then we change the crossing. Otherwise we do not. Keep doing this process until we arrive at a vertex. Next, we apply the same strategy to the another endpoint. Note that these crossing changes do not deform the $(\pm1)$-twist spun $\Gamma$ by Y. Marumoto and Y. Nakanishi's technique. Denote the deformed KTG after these crossing changes by $\widetilde{\Gamma}$. Since only the crossings with respect to the edge $e$ are changed in this process, $\widetilde{\Gamma} \setminus \{e\}$ is still unknotted. So, we can deform $\widetilde{\Gamma}$ to be unknotted by using Reidemeister moves and Y. Marumoto and Y. Nakanishi's technique if we need. The $(\pm1)$-twist spin of $\widetilde{\Gamma}$ with respect to $e$, therefore, is unknotted, so the $(\pm1)$-twist spin of $\Gamma$ with respect to $e$ is also unknotted. The proof is complete.
\end{proof}

\begin{corollary}
Any $(\pm1)$-twist spin of an unknotted or almost trivial graph is unknotted.
\end{corollary}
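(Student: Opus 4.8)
The plan is to reduce the corollary directly to Theorem~\ref{AT}, whose hypothesis is that for the chosen edge $e$ the deletion $\Gamma \setminus \{e\}$ is unknotted. Since Theorem~\ref{AT} already contains all of the work involving the twist-spinning construction and the Marumoto--Nakanishi technique, the entire task is to verify, for each of the two classes of graphs in the statement, that deleting an edge leaves an unknotted spatial graph. I would treat the unknotted and almost trivial cases in turn, in each case concluding with a single appeal to Theorem~\ref{AT}.

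First, if $\Gamma$ is unknotted then by definition there is an ambient isotopy carrying $\Gamma$ to an embedding $\Gamma_0 \subset \R^2$. For any chosen edge $e$, the same isotopy carries $\Gamma \setminus \{e\}$ to $\Gamma_0$ with the corresponding edge removed, which is again an embedding into the plane; hence $\Gamma \setminus \{e\}$ is unknotted. Theorem~\ref{AT} then gives that the $(\pm1)$-twist spin of $\Gamma$ with respect to $e$ is unknotted, and since $e$ was arbitrary this holds for every splitting edge. For the almost trivial case I would invoke the definition directly: an almost trivial graph has the property that every proper subgraph is unknotted, and for any edge $e$ the graph $\Gamma \setminus \{e\}$ is a proper subgraph, hence unknotted, so Theorem~\ref{AT} again applies.

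The only step requiring any care, and the closest thing to an obstacle, is the observation in the unknotted case that edge deletion cannot create knotting. This is immediate from the ambient isotopy to a planar diagram, but it deserves to be stated explicitly because, as emphasized earlier in the paper, the twist spin genuinely depends on the choice of splitting edge. What makes the corollary clean is that for both unknotted and almost trivial graphs the deletion $\Gamma \setminus \{e\}$ is unknotted no matter which edge $e$ is chosen, so the conclusion is uniform over all splitting edges and the dependence on the splitting arc becomes irrelevant.
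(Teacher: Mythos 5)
Your proposal is correct and follows essentially the same route as the paper: reduce to Theorem~\ref{AT} by checking that $\Gamma \setminus \{e\}$ is unknotted for every edge $e$, which for almost trivial graphs is immediate from the definition. The only (harmless) difference is that the paper dispatches the unknotted case directly, observing that twist-spinning a planar graph is clearly trivial, rather than routing it through Theorem~\ref{AT} as you do.
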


\begin{proof}
If a graph is unknotted, then clearly $(\pm1)$-twist spin of the given graph is unknotted. Let $\Gamma$ be an almost trivial graph. Since $\Gamma$ is almost trivial, the subgraph $\Gamma \setminus \{e\}$ of $\Gamma$ is unknotted for each edge $e$. By Theorem~\ref{AT}, the $(\pm1)$-twist spin of $\Gamma$ with respect to the edge $e$ is unknotted for each edge $e$. Hence any $(\pm1)$-twist spin of $\Gamma$ is unknotted.
\end{proof}

By above corollary, we can easily check the following example.

\begin{example}
Any $(\pm1)$-twist spin of Kinoshita's $\theta$-curve is unknotted.
\end{example}

We do not know if the converse: 
 ``If any $(\pm1)$-twist spin of a $\theta$-curve is unknotted, then the $\theta$-curve is either unknotted or almost trivial," of above corollary holds. 
 
\begin{theorem}
If any $(\pm1)$-twist spin of a $\theta$-curve $\Gamma$ is unknotted and $\Gamma \setminus \{e\}$ has a nontrivial Fox coloring for each $e$, then the $\theta$-curve is either unknotted or almost trivial.
\end{theorem}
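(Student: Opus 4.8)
The plan is to prove the contrapositive, after first reducing the conclusion to a statement about the three constituent knots. Write the edges of $\Gamma$ as $e_1,e_2,e_3$ and put $K_i=\Gamma\setminus\{e_i\}$, the constituent knot carried by the remaining two edges. A single edge is an unknotted arc, so the only proper subgraphs of $\Gamma$ that can be knotted are $K_1,K_2,K_3$; consequently $\Gamma$ is unknotted or almost trivial if and only if all three $K_i$ are unknotted. Indeed, if every $K_i$ is unknotted then either $\Gamma$ itself is unknotted, or $\Gamma$ is a knotted $\theta$-curve all of whose proper subgraphs are unknotted, which is the definition of almost trivial; the reverse implication is immediate. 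Thus it suffices to prove that, under the two hypotheses --- (A) that every $(\pm1)$-twist spin of $\Gamma$ is unknotted, and (B) that each $K_i$ admits a nontrivial Fox coloring --- each $K_i$ is unknotted.

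Arguing by contraposition, suppose some constituent, say $K:=K_{i_0}=\Gamma\setminus\{e_{i_0}\}$, is knotted. I would twist spin $\Gamma$ about the removed edge $e_{i_0}$, so that $K$ is precisely the loop that misses the spinning region. As in the constituent theorem proved above, $K$ then sweeps out a torus constituent of the foam $t_{e_{i_0}}(\Gamma)$, while the two constituents running through $e_{i_0}$ are (trivial) spheres, exactly as in Zeeman's theorem. A nontrivial Fox coloring of $K$ (provided by hypothesis (B) at the edge $e_{i_0}$) is a nontrivial coloring by a dihedral quandle $\Z/n$ for some $n\ge 3$; I would promote it to a nontrivial coloring of the whole foam by the associated quandle of the corresponding $G$-family $\Z/n\times\mathbb{Z}_2$ (the case $n=3$ being the quandle $Q$ used above). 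Concretely, color the arcs of $K$ as solid arcs $(C,1)$ using the given Fox coloring, and color the spun edge $e_{i_0}$ as a dotted arc $(C,0)$. At each trivalent vertex the three incident arcs are then the two solid arcs of $K$ together with the single dotted arc $e_{i_0}$, so exactly one arc is dotted, as the vertex condition requires.

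The crux --- and the step I expect to be the main obstacle --- is to verify that this assignment satisfies the coloring conditions throughout the entire movie of the twist spin, not merely in one still. Two matters must be checked. First, along $e_{i_0}$ the dotted color is governed by its crossings: passing under a solid arc of $K$ of color $y$ sends a color $x$ to $2y-x$, and one must route $e_{i_0}$ under the arcs of $K$ so that its color transitions consistently from the color of $K$ at one vertex to the color at the other; the transitivity of the dihedral action on $\Z/n$ is what makes this possible. Second, as $\theta$ runs over $[0,2\pi)$ the coloring must be invariant under the monodromy of the $(\pm1)$-twist. Here the distinction between $K$ and the arc constituents is essential: the arcs meet the spinning region and are trivialized by the $(\pm1)$-twist just as in Zeeman's theorem, whereas $K$ is disjoint from it, so the monodromy restricts near $K$ to a map isotopic to the identity and should therefore fix its Fox coloring. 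Carrying this out in detail amounts to repeating, for an arbitrary knotted constituent, the normal-form bookkeeping used for the explicit example in Fig.~\ref{6}.

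Granting the construction, the resulting $\Z/n\times\mathbb{Z}_2$-coloring of $t_{e_{i_0}}(\Gamma)$ is nontrivial because the underlying Fox coloring of $K$ is. By the theorem asserting that a foam carrying a nontrivial coloring by the associated quandle of a $G$-family is knotted, the foam $t_{e_{i_0}}(\Gamma)$ is knotted, contradicting hypothesis (A). Hence no constituent of $\Gamma$ can be knotted, and by the reduction of the first paragraph $\Gamma$ is either unknotted or almost trivial.
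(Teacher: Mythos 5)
Your proposal is correct and follows essentially the same route as the paper: assume some constituent $K=\Gamma\setminus\{e\}$ is knotted, use the hypothesized nontrivial Fox $n$-coloring of $K$ to produce a nontrivial coloring of the $(\pm1)$-twist spin with respect to $e$ by the associated quandle $\Z/n\times\mathbb{Z}_{2}$ (as in Fig.~\ref{6}), and contradict the unknottedness hypothesis. The paper's own proof is only a three-sentence sketch, so your write-up merely adds the reduction to constituent knots and the vertex/crossing bookkeeping that the paper leaves implicit.
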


\begin{proof}
Suppose that a $\theta$-curve $\Gamma$ is knotted and is not almost trivial. Then there exists an edge $e$ such that $\Gamma \setminus \{e\}$ is knotted. Since $\Gamma \setminus \{e\}$ is $n$-colorable for some positive integer $n$, the $(\pm1)$-twist spin of $\Gamma$ with respect to $e$ is colored by the associated quandle $\Z/n\times \mathbb{Z}_{2}$ in a manner similar to Fig.~\ref{6}. This is a contradiction.
\end{proof}

We can also construct a knotted $\theta$-curve such that every $(\pm1)$-twist spin of the knotted $\theta$-curve is knotted. Since every constituent knot of the knotted $\theta$-curve in Fig.~\ref{9} is the connected sum of a positive trefoil knot and itself (a granny knot) which is $3$-colorable, so the $(\pm1)$-twist spin is knotted.

\begin{figure}[h]
\includegraphics[width=3.5cm]{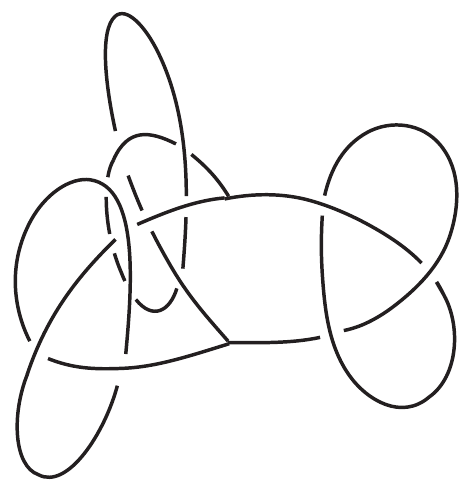}
\caption{A knotted $\theta$-curve for which every $(\pm1)$-twist spin is knotted.}
\label{9}
\end{figure}

\section*{Acknowledgements} The authors would like to thank Seiichi Kamada for some helpful advice. Yongju Bae introduced the authors of this paper. The paper grew out of the second author's master's thesis at the University of South Alabama.

\bibliographystyle{amsplain}

\begin{flushleft}
J. Scott Carter \\  
University of South Alabama \\ 
Mobile, AL 36688 \\
E-mail address: {\tt carter@southalabama.edu} 
\end{flushleft}

\begin{flushleft}
Seung Yeop Yang \\ 
The George Washington University \\ 
Washington, DC 20052 \\
E-mail address: {\tt  syyang@gwu.edu}
\end{flushleft}

\end{document}